\newtheorem{theorem}{Theorem}
\newtheorem{corollary}{Corollary}
\newtheorem{lemma}{Lemma}
\newtheorem{proposition}{Proposition}
\newtheorem{notation}{Notation}
\newtheorem{remark}{Remark}
\newtheorem{example}{Example}
\begin{document}

\title[Specht property for the $2$-graded identities of $B_m$]{Specht property for the $2$-graded identities of the Jordan superalgebra of a bilinear form}

\author[Silva]{Diogo Diniz}\address{Departamento de Matem\'atica\\ UAME/CCT-UFCG \\ Avenida Apr\'igio Veloso 882\\ 58109-970 Campina Grande-PB, Brazil} \email{diogo@dme.ufcg.edu.br}

\author[Souza]{Manuela da Silva Souza}\address{Departamento de Matem\'atica
\\IM-UFBA \\
Av. Adhemar de Barros, S/N, Ondina \\
40170-110 Salvador-BA, Brazil} \email{manuela.souza@ufba.br}

\keywords{Graded polynomial identity, Jordan algebra, Specht property}

\subjclass[2010]{15A72, 17C05, 17C20}

\begin{abstract}
Let $K$ be a field of characteristic zero and $V$ a vector space of dimension $m>1$ with a nondegenerate symmetric bilinear form $f:V\times V \rightarrow K$. The Jordan algebra $B_m=K\oplus V$ of the form $f$ is a superalgebra with this decomposition. We prove that the ideal of all the $2$-graded identities of $B_m$ satisfies the Specht property and we compute the $2$-graded cocharacter sequence of $B_m$.
\end{abstract}

\maketitle

\section{Introduction}
Let $K$ be a field of characteristic zero and $V$ be a vector space of dimension $m>1$ with a nondegenerate symmetric bilinear form $f$. The Jordan algebra of the form $f$ (see \cite{zsss}) is denoted by $B_m$. The algebra $B_m$ is important in the class of Jordan algebras due to a well-known result of Zelmanov \cite{zelmanov} that classified,  up to isomorphism, finite dimensional simple Jordan algebras when $K$ is algebraically closed in one of the following types: $M_n(K)^+$, the special Jordan algebra of $n\times n$ matrices, $M_n(K)^t$, the algebra of $n\times n$ symmetric matrices with respect to the transpose involution, $M_{2n}(K)^s$, the $2n\times 2n$ symmetric matrices with respect to the symplectic involution, and the algebra $B_m$ when the form is nondegenerate.  These are special simple Jordan algebras, for example, the Clifford algebra of the form $f$ is an associative enveloping for $B_m$. The Jordan algebra $B$ of the form $f$ is defined similarly by replacing a vector space of finite dimension by a vector space of infinite dimension.

Polynomial identities for these algebras have been studied by a number of researchers. Drensky in \cite{drensky} obtained a complete description of the relatively free algebras of the varieties generated by $B_m$ and $B$ denoted by $\mbox{var}(B_m)$ and $\mbox{var}(B)$ respectively. Furthermore he also established the $GL_m$-module structure of these algebras and their Hilbert series. Koshlukov has studied the polynomial identities and the asymptotic behavior of codimensions for the subvarieties of $\mbox{var}(B_m)$ and $\mbox{var}(B)$ (see \cite{koshlukov}). The ideal $\mbox{Id}(A)$ of all identities of an algebra $A$ satisfies the Specht property if $\mbox{Id}(A)$ itself and all $T$-ideals containing $\mbox{Id}(A)$ are finitely generated as $T$-ideals. Kemer proved that every associative algebra satisfies the Specht property (see, for example, \cite{kemer}). Iltyakov \cite{iltyakov} proved  that the variety of unitary algebras generated by $B_m$ satisfy the Specht property. The analogous result for finitely generated Jordan PI-algebras was proved by Va$\breve{i}$s and Zelmanov in \cite{vais/zelmanov}.  In \cite{vasilovsky1} Vasilovsky has explicited a finite bases for polynomial identities of $B$ and $B_m$ for $m \geq 2$ over an infinite field of characteristic $\neq 2$ using ideas developed by Il'tyakov in \cite{iltyakov}.

Other types of identities such as trace identities and graded identities for the algebra $B_m$ were also studied (see \cite{vasilovsky2}, \cite{vasilovsky3}, \cite{koshlukov/silva}). The decomposition $B_m=K\oplus V$ is a natural $\mathbb{Z}_2$-grading and a finite basis for the graded identities was determined in  \cite{koshlukov/silva}. The $\mathbb{Z}_2$-graded identities of the algebras $B_n$ with other types of gradings were studied in \cite{vasilovsky2}. We remark that the group gradings of the algebras $B_n$ were described in \cite{bahturin/shestakov}. This result was generalized by Martino in \cite{martino} for $J_n = B_m \oplus D_k$ the Jordan algebra with a degenerate bilinear form of rank $m$, where $D_k$ is the vector space spanned by the degenerate elements of the basis of a vector space of dimension $n$. For associative algebras graded by a finite group it was proved that every ideal of graded identities is finitely generated (\cite{sviridova}, \cite{aljadeff/belov}). Concerning Lie algebras the Specht property for the graded identities of the Lie algebra $sl_2(K)$ of $2\times 2$ traceless matrices was established  in \cite{razmyslov} for the trivial grading and in \cite{giambruno/souza} for the other gradings.

In the present paper we prove that the variety of $\mathbb{Z}_2$-graded Jordan algebras generated by $B_m=K\oplus V$ satisfies the Specht property and we compute the $2$-graded cocharacter sequence of $B_m$.

\section{Preliminaries}
Throughout the paper $K$ is a field of characteristic zero and all vector spaces and algebras are considered over $K$. Let $V_m$ be a vector space of dimension $m$ with a nondegenerate symmetric bilinear form 
$f:V\times V \rightarrow K$. The vector space $B_m=K\oplus V_m$ with the multiplication defined by \[(\alpha+u)(\beta+v)=(\alpha\beta+ f(u,v))+(\alpha v+\beta u), \hspace{0.5cm} \alpha,\beta \in K, u,v \in V,\] is a Jordan algebra. We set $(B_m)_0=K$, $(B_m)_1=V_m$ and obtain the structure of a Jordan superalgebra on $B_m$. 

Let $Z=X\cup Y$ be the disjoint union of two countable sets. The free Jordan algebra $J(Z)$ freely generated by $Z$ has a $\mathbb{Z}_2$-grading \[J(Z)=J(Z)_0\oplus J(Z)_1,\] where $J(Z)_0$ (resp. $J(Z)_1$) is the subspace generated by the monomials with an even (resp. odd) number of variables in $Y$.

A homomorphism $\Theta: A=A_0\oplus A_1\rightarrow B=B_0\oplus B_1$ of $\mathbb{Z}_2$-graded algebras is \textit{graded} if $\Theta(A_i)\subseteq B_i$, $i\in \mathbb{Z}_2$. A mapping $\theta:Z\rightarrow A$ such that $\theta(X)\subseteq A_0$ and $\theta(Y)\subseteq A_1$ can be uniquely extended to a graded homomorphism  $\Theta: J(Z)\rightarrow A$. The set of polynomials $f(x_1,\dots, x_k, y_1,\dots, y_n)$ such that $f(a_1,\dots, a_k, a^{\prime}_1,\dots, a_n^{\prime})=0$ for every $a_1,\dots, a_k \in A_0$, $a_1^{\prime},\dots, a_n^{\prime}\in A_1$ is an ideal of $J(Z)$. Such ideal, denoted by $\mbox{Id}_2(A)$ is invariant under all graded endomorphisms of $J(Z)$ and is called a \textit{$T_2$-ideal}. An element of $\mbox{Id}_2(A)$ is a \textit{graded polynomial identity} for $A$. If $S\subseteq \mbox{Id}_2(A)$ is a set of polynomials such that $\mbox{Id}_2(A)$ is the intersection of all $T_2$-ideals of $J(Z)$ containing $S$ we say that this set is a \textit{basis} for $\mbox{Id}_2(A)$. The variety of $\mathbb{Z}_2$-graded Jordan algebras determined by $A$ satisfies the \textit{Specht property} if every $T_2$-ideal containing $\mbox{Id}_2(A)$ admits a finite basis.

The $T_2$-ideal $\mbox{Id}_2(A)$ is graded,i.e., 
\[\mbox{Id}_2(A)=\mbox{Id}_2(A)\cap J(Z)_0\oplus \mbox{Id}_2(A)\cap J(Z)_1,\] and therefore the quotient algebra $J(Z)/\mbox{Id}_2(A)$, denoted by $J(A)$, admits a natural grading that makes the quotient map a graded homomorphism. The algebra $J(A)$ is the \textit{relatively free algebra} in the variety determined by $A$. 

The study of the graded identities of an algebra $A$ in characteristic zero can be reduced to the study of the multilinear ones. For every $n + k \geq 1$ and $k \geq 0$ the vector space $P_{k, n}$ of the multilinear polynomials of degree $n + k$ in the variables $x_1$, $\ldots$, $x_k$, $y_1$, $\ldots$, $y_n$ is a $S_k \times S_n$-module under the action
$$(\lambda, \mu)f(x_1, \ldots, x_k, y_1, \ldots, y_n) = f(x_{\lambda(1)}, \ldots, x_{\lambda(k)}, y_{\mu(1)}, \ldots, y_{\mu(n)})$$
and this in turn induces a structure of the $S_k \times S_{n}$-module to the space 
$$P_{k, n}(A) = \dfrac{P_{k, n}}{(P_{k, n} \cap \mbox{Id}_2(A))}.$$
Let
$$\chi_{k, n}(A) = \chi_{k, n}(P_{k, n}(A)) = \sum m_{\sigma, \tau} \chi_\sigma \otimes \chi_\tau$$
be the decomposition of $\chi_{k, n}(A)$ the $(k,n)$-th cocaracter of $A
$ into irreducible  $S_k \times S_n$-characters.
 
We recall some results about finite basis property (or well-quasi-ordering).  For more detailed
background see \cite{higman}.

A relation $a \leq b$ on a set $A$ is a quasi-order if it is reflexive and transitive. If $B$ is a subset of a quasi-ordered set $A,$ the closure of $B$, denoted by $\overline{B},$ is defined as
$\overline{B} = \{a \in A\; : \; \exists\; b \in B \mbox{ such that } b \leq a \}.$
We say that $B$ is a closed subset when $B = \overline{B}$. The quasi-ordered set $(A, \leq)$  has the finite basis property if every closed subset of $A$ is the closure of a finite set. Next we state some alternative definitions of this property. 

\begin{theorem} \label{pbf}The following conditions on a quasi-ordered set $A$ are equivalent.
\begin{enumerate}
\item Every closed subset of $A$ is the closure of a finite subset;
\item If $B$ is any subset of $A,$ there is a finite $B_0$ such that $B_0 \subset B \subset \overline{B_0};$
\item Every infinite sequence of elements $\{a_i\}_{i \geq 0}$ of $A$ has an infinite ascending subsequence $a_{i_1} < a_{i_2} < \cdots < a_{i_k} < \cdots.$
\end{enumerate}
\end{theorem}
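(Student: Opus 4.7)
My plan is to establish the cycle of implications $(1) \Rightarrow (2) \Rightarrow (3) \Rightarrow (1)$.

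The implication $(1) \Rightarrow (2)$ is a direct unpacking of the definition of closure. Given any $B \subseteq A$, I apply $(1)$ to the closed set $\overline{B}$ to obtain a finite $F$ with $\overline{F} = \overline{B}$. Each $f \in F$ lies in $\overline{B}$, so I may pick $b_f \in B$ with $b_f \leq f$, and set $B_0 = \{b_f : f \in F\}$. Then $F \subseteq \overline{B_0}$, so $\overline{B_0} \supseteq \overline{F} = \overline{B} \supseteq B$, while $B_0 \subseteq B$ by construction, giving $(2)$.

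For $(2) \Rightarrow (3)$, I argue by contradiction: suppose an infinite sequence $\{a_i\}$ admits no infinite ascending subsequence. Call an index $n$ \emph{terminal} if $a_n \not\leq a_m$ for every $m > n$. If only finitely many indices are terminal, then past some $N$ every $a_n$ has a successor $a_m$ with $n < m$ and $a_n \leq a_m$, and iterating yields an infinite ascending subsequence --- a contradiction. So there are infinitely many terminal indices $n_1 < n_2 < \cdots$, and the subsequence $B = \{a_{n_k}\}$ satisfies $a_{n_i} \not\leq a_{n_j}$ whenever $i < j$. Applying $(2)$ to $B$ yields a finite $B_0 \subseteq B$ with $B \subseteq \overline{B_0}$; by pigeonhole some $a_{n_{k_0}} \in B_0$ satisfies $a_{n_{k_0}} \leq a_{n_k}$ for infinitely many $k$, in particular for some $k > k_0$, contradicting terminality.

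For $(3) \Rightarrow (1)$, let $C \subseteq A$ be closed. Call $b \in C$ \emph{minimal} if $b' \in C$ and $b' \leq b$ imply $b \leq b'$, and let $B_0 \subseteq C$ consist of one representative of each equivalence class (under $b \sim b' \iff b \leq b' \leq b$) of minimal elements. I first show that every $c \in C$ dominates a minimal element: otherwise I could produce a strictly descending chain $c > c_1 > c_2 > \cdots$ in $C$ (``strict'' in the sense $c_{i+1} \leq c_i$ and $c_i \not\leq c_{i+1}$), and $(3)$ would give $i_1 < i_2$ with $c_{i_1} \leq c_{i_2}$, contradicting strict descent. Hence $C \subseteq \overline{B_0}$, and the reverse inclusion holds because $B_0 \subseteq C$ with $C$ closed. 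Finally, $B_0$ is an antichain (comparability of two distinct representatives would force equivalence, hence equality), and an infinite antichain enumerated as a sequence would have no ascending pair, contradicting $(3)$; so $B_0$ is finite.

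The main obstacle is the extraction argument in $(2) \Rightarrow (3)$: one must convert the failure of an infinite ascending subsequence into a genuinely ``bad'' subsequence before applying $(2)$, and only then does pigeonhole close the argument. The construction of minimal elements in $(3) \Rightarrow (1)$ is parallel and relies on the same well-quasi-ordering feature --- that neither strictly descending chains nor infinite antichains can exist.
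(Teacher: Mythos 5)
Your proof is correct. There is nothing in the paper to compare it against: the authors state this theorem as background and refer to Higman's paper \cite{higman} for the proof, so any complete argument is ``new'' relative to the text. What you give is essentially the classical cycle $(1)\Rightarrow(2)\Rightarrow(3)\Rightarrow(1)$: the closure/representative argument for $(1)\Rightarrow(2)$, the extraction of an infinite ``bad'' (pairwise non-ascending) subsequence from the terminal indices followed by pigeonhole for $(2)\Rightarrow(3)$, and the minimal-elements/antichain argument for $(3)\Rightarrow(1)$ --- all steps check out, including the subtle points (that $\overline{B}$ is itself closed, that the descending chain in $(3)\Rightarrow(1)$ is strict in the quasi-order sense, and that $B_0$ is an antichain and hence finite). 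One remark worth recording: condition $(3)$ as printed uses strict inequalities $a_{i_1}<a_{i_2}<\cdots$; read literally this would make the equivalence false (in a one-point set, $(1)$ holds but the constant sequence has no strictly ascending subsequence), so the intended meaning --- and the one in Higman's original statement --- is the weak relation $a_{i_1}\leq a_{i_2}\leq\cdots$, which is exactly what your argument produces in $(1)\Rightarrow(2)\Rightarrow(3)$ and uses in $(3)\Rightarrow(1)$.
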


The next proposition is immediate using condition $(3)$.

\begin{proposition}\label{ppbf}
If $(A_1, \leq_{1})$, $\ldots$, $(A_k, \leq_k)$ have the finite basis property than $(A_1\times \cdots \times A_k, \leq)$ has the finite basis property where $(a_1, \ldots, a_k)\leq (b_1, \ldots, b_k)$ if and only if $a_i \leq_i b_i$ for all $i \in\{1, \ldots, k\}$.
\end{proposition}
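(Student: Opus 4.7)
The plan is to prove the proposition by induction on $k$; the inductive step is immediate once the case $k=2$ is established, because $A_1\times\cdots\times A_k$ with the coordinate-wise order can be identified, as a quasi-ordered set, with $(A_1\times\cdots\times A_{k-1})\times A_k$. So the crux is the binary case.

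To treat $k=2$ I would invoke the characterization (3) of Theorem \ref{pbf}. Let $\{(a_i,b_i)\}_{i\geq 0}$ be an arbitrary infinite sequence in $A_1\times A_2$. I would first apply condition (3) to the projection $\{a_i\}_{i\geq 0}$ in $(A_1,\leq_1)$ to produce an infinite subsequence of indices $i_1<i_2<\cdots$ along which $a_{i_1}\leq_1 a_{i_2}\leq_1\cdots$. Next I would apply (3) to the sequence $\{b_{i_j}\}_{j\geq 1}$ in $(A_2,\leq_2)$ to extract a further infinite subsequence of indices, on which the second coordinates ascend as well. The composite subsequence then ascends simultaneously in both coordinates, hence in the coordinate-wise order on $A_1\times A_2$; this verifies condition (3) for the product, so by Theorem \ref{pbf} the product has the finite basis property.

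The only step requiring care is the bookkeeping of the two nested subsequence extractions, but there is no real obstacle: condition (3) is preserved under passing to subsequences, so the second application is legitimate. One minor point worth noting during the write-up is that an ascending pair in both coordinates is, by definition, an ascending pair in the product order, which is what justifies the last sentence.
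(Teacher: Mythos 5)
Your argument is correct and is exactly what the paper has in mind: the paper gives no written proof, remarking only that the proposition ``is immediate using condition (3),'' and your double subsequence extraction is the standard way to make that remark precise. Nothing further is needed.
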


\section{The graded identities of $B_m$}

A basis for the graded identities of $B_m$ was determined in \cite{koshlukov/silva} using the theory developed in 
\cite{concini/procesi}. In this section we recall the necessary results and definitions from these articles.

The polynomials 
\begin{equation}\label{identity}
(x_1 z_1)z_2-x_1(z_1 z_2),
\end{equation}
where $z_1\in \{x_1,y_1\}$ and $z_2\in \{x_2, y_2\}$, are graded identities for $B_m$. Moreover we have the following:

\begin{proposition}\cite[Corollary 21]{koshlukov/silva}\label{basisprop}
The graded identities (\ref{identity}) together with the identity \[\sum_{\sigma \in S_{m+1}} (-1)^{\sigma}y_{\sigma(1)}(y_{m+1}y_{\sigma(2)})\cdots (y_{2m+1}y_{\sigma(m+1)}),\] form a basis for the ideal of graded identities of the Jordan superalgebra $B_m$.
\end{proposition}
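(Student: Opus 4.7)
The plan is to prove the proposition in two steps: first verify that the listed polynomials lie in $\mathrm{Id}_2(B_m)$, then show that they generate this $T_2$-ideal. Verification is direct. The relations $(x_1 z_1)z_2 - x_1(z_1 z_2) = 0$ hold because $x_1$ is evaluated in $(B_m)_0 = K$, so multiplication by $x_1$ is scalar multiplication, which associates with any product. The long alternating polynomial is skew-symmetric in a set of $m+1$ of the $y$-variables by its construction; evaluated on any $m+1$ vectors of $V_m$, those vectors are linearly dependent, forcing the alternating expression to vanish.

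For the converse, let $I$ denote the $T_2$-ideal generated by the listed polynomials and work in $J(Z)/I$. The relations $(x_1 z_1)z_2 = x_1(z_1 z_2)$, combined with Jordan commutativity, show that each $x_i$-variable and each element of even degree (such as a product $y_i y_j$) behaves as a ``scalar'' that commutes past and associates freely with any other factor. Consequently every multilinear monomial of $P_{k,n}$ is congruent modulo $I$ to a canonical form: a product of the $x_i$'s and of ``bonds'' $(y_{i_s} y_{j_s})$ pairing up the even-indexed $y$-variables, multiplied by a single leftover $y_l$ when $n$ is odd. The alternating identity then lets one further restrict the admissible canonical forms whenever more than $m$ of the $y$-variables appear.

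The main obstacle is to show that these canonical forms are linearly independent modulo $\mathrm{Id}_2(B_m)$, so that no further graded identity is needed. I would establish this by evaluating on $B_m$: set $x_i = 1 \in K$ and the $y_j$ equal to elements of a basis of $V_m$, so that each bond $(y_{i_s} y_{j_s})$ becomes the scalar $f(v_{i_s}, v_{j_s})$. Nondegeneracy of $f$ together with the invariant theory of the orthogonal group developed by De Concini and Procesi, cited in the paper, translates the question of linear independence into a classical statement about polynomial invariants of tuples of vectors, with the alternating identity corresponding to the fundamental relation arising from $\dim V_m = m$. This invariant-theoretic step is the nontrivial input; the remainder of the argument is a careful organizational reduction followed by a dimension count matching $\dim P_{k,n}/(P_{k,n}\cap I)$ with $\dim P_{k,n}(B_m)$.
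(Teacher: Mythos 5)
This proposition is not proved in the paper at all: it is imported verbatim as Corollary~21 of \cite{koshlukov/silva}, so there is no internal proof to compare against. Your outline is nevertheless essentially the strategy of that cited source: verify the identities directly (your two verifications are correct --- scalar multiplication by elements of $(B_m)_0=K$ associates, and an alternating function of $m+1$ vectors of an $m$-dimensional space vanishes), reduce every multilinear element modulo the weak associativity identities to products of ``bonds'' $y_iy_j$ (which, being even, can themselves be substituted for $x_1$ in (\ref{identity}) and hence associate and commute), and then settle linear independence by the characteristic-free invariant theory of the orthogonal group from \cite{concini/procesi}. Two points deserve more care than your sketch gives them. First, when $n$ is odd the evaluation is $V_m$-valued, so what is needed is the first and second fundamental theorems for \emph{covariants}, not just invariants; this is exactly what the $0$-tableaux of De Concini--Procesi encode, and it is why the paper's Proposition~\ref{basis} distinguishes doubly standard tableaux (a basis of $M_0$) from doubly standard $0$-tableaux (a basis of $M_1$). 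Second, your phrase ``the alternating identity lets one further restrict the admissible canonical forms'' is precisely the straightening algorithm: one must check that every product of bonds is, modulo consequences of the alternating identity, a combination of doubly standard (0-)tableaux, and that these survive as linearly independent elements of $P_{k,n}(B_m)$; that straightening argument is the real content of the converse direction and cannot be reduced to a bare ``dimension count.'' With those steps filled in from \cite{concini/procesi} and \cite{koshlukov/silva}, your plan is sound and coincides with the known proof.
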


A multihomogeneous polynomial $f(x_1, \ldots, x_k, y_1, \ldots, y_n)$ can be written, modulo $\mbox{Id}_2(B_m)$, as $$f(x_1, \ldots, x_k, y_1, \ldots, y_n) = x_1^{\alpha_1} \ldots x_k^{\alpha_k}g(y_1, \ldots, y_n)$$ where $g$ is a polynomial on the variables $y$'s only. 

 Now we present some results from \cite{concini/procesi}. A \textit{double tableau} is an array \[T=\left(
\begin{array}{cccc}
	p_{11}&p_{12}&\cdots&p_{1m_1}\\
	p_{21}&p_{22}&\cdots&p_{2m_2}\\
	\vdots&\vdots&\ddots&\vdots\\
	p_{k1}&p_{k2}&\cdots&p_{km_k}
\end{array}\left|\begin{array}{cccc}
	q_{11}&q_{12}&\cdots&q_{1m_1}\\
	q_{21}&q_{22}&\cdots&q_{2m_2}\\
	\vdots&\vdots&\ddots&\vdots\\
	q_{k1}&q_{k2}&\cdots&q_{km_k}
\end{array}\right. \right),
\]
where $m_1\geq m_2\geq \dots \geq m_k$ and the $p_{ij}$ and $q_{ij}$ are positive integers. An array obtained from a double tableau by replacing $p_{11}$ by $0$ is called a \textit{$0$-tableau}. If $T$ is a double tableau or a $0$-tableau we may form from $T$ the single tableau \[T^{\prime}=\left(\begin{array}{cccc}
	p_{11}&p_{12}&\cdots&p_{1m_1}\\
	q_{11}&q_{12}&\cdots&q_{1m_1}\\
	p_{21}&p_{22}&\cdots&p_{2m_2}\\
	q_{21}&q_{22}&\cdots&q_{2m_2}\\
	\vdots&\vdots&\ddots&\vdots\\
	p_{k1}&p_{k2}&\cdots&p_{km_k}\\
	q_{k1}&q_{k2}&\cdots&q_{km_k}
\end{array}\right).\] We say that $T$ is \textit{doubly standard} if the tableau $T^{\prime}$ is standard. Recall that the single tableau 
\[A=\left(\begin{array}{cccc}
	a_{11}&a_{12}&\cdots&a_{1m_1}\\
	\vdots&\vdots&\ddots&\vdots\\
	a_{k1}&a_{k2}&\cdots&a_{km_k}\\
\end{array}\right)\] is standard if we have the inequalities 
\begin{enumerate}
\item[(i)] $a_{ij}<a_{il}$, if $l>j$
\item[(ii)] $a_{ij}\leq a_{lj}$, if $l\geq i$.
\end{enumerate}
We associate (see \cite{koshlukov/silva}) to a tableau $T$ a polynomial, still denoted by $T$, of $J(B_m)$ \[T=\prod_{l=1}^k det\left|(y_{p_{li}}\cdot y_{q_{lj}})\right|, \mbox{ } i,j=1,\dots, m_l.\] If $T$ is a $0$-tableau the tableau $T_1$ obtained from $T$ by deleting the first line is a double tableau and we associate to $T$ the polynomial 
\[T=\left(\sum_{\sigma \in S_{m_1}}(-1)^{\sigma}y_{q_{1\sigma(1)}}(y_{p_{12}}\cdot y_{q_{1\sigma(2)}})\cdots (y_{p_{1m_1}}\cdot y_{q_{1\sigma(m_1)}}) \right)\cdot T_1.\]
\begin{notation}
We denote by $T_n$ and $T_n^0$ the polynomial associated to the tableau $(12\dots n|12\dots n)$ and the $0$-tableau $(02\dots n|12\dots n)$ respectively.
\end{notation}

Let $A$ be a standard single tableau of the form 
\[A=\left(\begin{array}{ccccccc}
	1&2&3&\cdots&k_1&\tau_1&\dots\\
	1&2&3&\cdots&k_2&\tau_2&\dots\\
	\vdots&\vdots&\vdots&\vdots& & & \\
	1&2&3&\cdots&k_s&\tau_s&\dots\\
	\tau_{s+1}&&&&&&
\end{array}\right),\]
where $\tau_i>k_i+1$, $i=1,\dots, s$ and $\tau_{s+1}>1$. Since $A$ is standard we have 
\begin{equation}\label{k}
k_1\geq k_2\geq \dots \geq k_s.
\end{equation}
Let $h_i(A)$ denote the number of times the number $i-1$ appear in the sequence (\ref{k}) for $i=2,\dots, n$. Thus, for example, $h_2(A)$ is the number of rows starting with $1 \tau$, $\tau>2$, $h_3(A)$ is the number of rows starting with $1 2 \tau$, $\tau > 3$, etc. Consider the tableau obtained from $A$ by deleting the $1$ in each of the $h_2(A)$ rows of $A$ starting with $1 \tau$, $\tau>2$ and replacing it with $2$, then deleting the $1$ in each of the $h_3(A)$ rows starting with $1 2 \tau$, $\tau > 3$, replacing it with a $3$, and so on. If we reorder the elements in each row the resulting tableau, which is denoted by $F(A)$ is standard.

\begin{notation}
Let $T$ be a double tableau and $T^{\prime}$ the corresponding single tableau associated to $T$. We denote by $F(T)$ the double tableau for which the corresponding single tableau is $F(T^{\prime})$.
\end{notation}

Let $\lambda_2,\dots,\lambda_n$ be arbitrary scalars. We substitute in the linear combination \[p=\sum c_iT_i\] of distinct doubly standard tableau the variable $y_1$ by 
$y_1+\sum_{i=2}^n \lambda_i y_i$. The resulting polynomial in $J(B_m)$ is 
\begin{equation}\label{exp}
\sum \lambda_2^{h_2}\lambda_3^{h_3}\cdots \lambda_n^{h_n} P_{h_2\dots h_n}.
\end{equation}
We consider the lexicographical order on the tuples $(h_2,\dots_, h_n)$. The next remark is a simple adaptation of the discussion in \cite[Section 1]{concini/procesi} (see also the proof of Lemma 5.4 in \cite{concini/procesi}).

\begin{remark}\label{remark}
The polynomial in (\ref{exp}) corresponding to the maximal $(h_2,\dots, h_n)$ is \[\overline{p}=\sum_j \epsilon_jc_jF(T_j),\] where $\epsilon_j=\pm 1$, and the sum is taken from the doubly standard tableaux $T_j$ for which $(h_2(T_j^{\prime}),h_3(T_j^{\prime})\dots h_n(T_j^{\prime}))$ is maximal. Moreover the doubly standard tableaux $F(T_j)$ are pairwise distinct.
\end{remark}

\begin{proposition}[\cite{koshlukov/silva}, Proposition 19]\label{basis}
Let $M$ be the subalgebra of $J(B_m)$ generated by the variables in $Y$. The doubly standard tableau form a basis of the vector space $M_0=M\cap J(B_m)_0$. Moreover the doubly standard $0$-tableau form a basis of the vector space $M_1=M\cap J(B_m)_1$.
\end{proposition}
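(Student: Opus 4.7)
The plan is to verify spanning and linear independence separately, both working modulo $\mathrm{Id}_2(B_m)$. Since $M$ is generated by the $y$'s, every element of $M$ is represented by a Jordan polynomial in the $y_i$, and the task is to reduce each such polynomial to a linear combination of (doubly standard) tableaux.

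For spanning I would first show that every Jordan monomial in the $y_i$ is congruent, modulo $\mathrm{Id}_2(B_m)$, to a product of symmetric brackets $(y_i\cdot y_j)$, with one extra leading factor $y$ when the degree is odd. Each such bracket lies in $J(Z)_0$, so identity \eqref{identity} (closed under substitution of even variables) supplies the associativity rule $((y_iy_j)z)w=(y_iy_j)(zw)$; combined with Jordan commutativity and induction on the degree, this collapses every monomial into a product of brackets possibly times a single $y$. Afterwards I would perform the Young straightening procedure developed in \cite{concini/procesi}, in which the alternating identity of Proposition \ref{basisprop} plays the role of the Plücker-type relation and forces the process to terminate with doubly standard tableaux of height at most $m$. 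This handles both $M_0$ (no leftover factor) and $M_1$ (one leftover $y$, producing a $0$-tableau) in parallel.

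For linear independence I would argue by induction using Remark \ref{remark}. Assume $\sum_i c_iT_i=0$ is a nontrivial relation among distinct doubly standard tableaux (respectively $0$-tableaux). Performing the substitution $y_1\mapsto y_1+\sum_{i=2}^n\lambda_iy_i$ and expanding as in \eqref{exp}, every coefficient $P_{h_2,\dots,h_n}$ must vanish in $M$. Selecting the lexicographically maximal tuple $(h_2,\dots,h_n)$, Remark \ref{remark} rewrites this coefficient as $\sum_j\epsilon_jc_jF(T_j)$ with the $F(T_j)$ pairwise distinct doubly standard tableaux. An inductive hypothesis on a suitable complexity statistic (for instance the largest entry or the total size of the tableaux remaining) then yields $c_j=0$ for the indices $j$ that attain the maximum; peeling those terms off and iterating exhausts the sum.

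The main obstacle should be the spanning step. Collapsing an arbitrary Jordan monomial in the $y_i$ to a product of brackets genuinely requires the Jordan axiom together with identity \eqref{identity}, and a careful induction on the parse tree of the monomial is needed to check that every nested configuration regroups as desired; the argument is not merely a matter of right-normalization, since identity \eqref{identity} applies only when an even factor sits on the left. The subsequent straightening and the linear independence argument are, by comparison, routine applications of Remark \ref{remark} and the classical Young-tableau machinery.
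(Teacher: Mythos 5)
The paper does not actually prove this proposition --- it is imported verbatim from \cite{koshlukov/silva} (Proposition 19) --- so there is no internal proof to compare against; I will judge your reconstruction on its own terms. Your spanning half is sound in outline: identity (\ref{identity}) lets any even factor associate past anything, so every Jordan monomial in the $y$'s collapses to a product of brackets $(y_i\cdot y_j)$, possibly times one leftover odd variable, and the straightening law then reduces these to doubly standard (0-)tableaux. One small correction there: the identity of Proposition \ref{basisprop} plays the role of the second-fundamental-theorem relation (vanishing of tableaux with more than $m$ columns), not of the Pl\"ucker relations, which hold formally for minors of a generic symmetric matrix and cost nothing.

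The genuine gap is in the linear independence half. From $\sum_j\epsilon_jc_jF(T_j)=0$ you want to conclude $c_j=0$, but that is itself an instance of the independence you are proving: the $F(T_j)$ are distinct doubly standard tableaux of the same shape and total size as the $T_j$, and neither the largest entry nor the total size decreases under $F$, so the induction you propose is not well-founded. What a correct argument needs (and what \cite{concini/procesi} and \cite{koshlukov/silva} actually supply) is an ascending induction on the tuple $(h_2,\dots,h_n)$, which is bounded above and terminates at the canonical tableaux whose rows begin with initial segments $12\cdots k$, together with a base case consisting of an honest evaluation in $B_m$: one must exhibit vectors $v_1,\dots,v_n\in V_m$ on which a nontrivial combination of canonical tableaux does not vanish. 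This is precisely where the nondegeneracy of $f$ and the hypothesis $\dim V_m=m$ enter --- the brackets evaluate to the entries of an arbitrary symmetric Gram matrix of rank at most $m$, and one invokes the independence of products of minors indexed by doubly standard tableaux with at most $m$ columns. Your sketch never evaluates anything in $B_m$; a purely formal manipulation inside $J(Z)$ cannot distinguish $\mathrm{Id}_2(B_m)$ from a strictly larger $T_2$-ideal, so it cannot establish independence modulo $\mathrm{Id}_2(B_m)$.
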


Now we are able to prove our first result. The next remarks will be usefull.

\begin{remark}\label{remark2}
Let $h, h^{\prime}$ be polynomials in $M_1$ not depending on $y_1$. Since the bilinear form in the definition of $B_m$ is nondegenerate the equality $hy_1=h^{\prime}y_1$ implies that $h=h^{\prime}$.
\end{remark}

\begin{remark}\label{remark3}
If $h(y_1,\dots, y_{m+1})\in M_0$ is a polynomial of degree $1$ in $y_{j}$ then there exists $h^{\prime}\in M_1$ such that $h=h^{\prime}y_{j}$. To prove this claim note that by renaming the variables we may assume that $j=1$. We write $h$ as a linear combination of doubly standard tableaux. The standard nature of the tableaux imply that the leftmost entry in the first row is $1$ and this proves the claim. 
\end{remark}

\begin{proposition}\label{prop}
Let $g(y_1,\dots,y_m)$ be a multihomogeneous polynomial of $J(B_m)$ that for any scalars $\lambda_2,\dots, \lambda_m$ is invariant under the substitution $y_1\equiv y_1+\sum_{i=2}^m\lambda_iy_i$. Then there exists a polynomial $h(y_2,\dots,y_m)$ such that one of the following equalities hold:
\begin{enumerate}
\item[(i)] $g(y_1,\dots,y_m)=(T_m)^k h(y_2,\dots,y_m)$,  
\item[(ii)] $g(y_1,\dots,y_m)=(T_m)^{k^{\prime}}T_m^0h(y_2,\dots,y_m)$, 
\end{enumerate}
where $k,k^{\prime}\geq 0$. Moreover if $g$ lies in $J(B_m)_0$ then the equality that holds is $(i)$. 
\end{proposition}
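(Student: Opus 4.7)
The plan is to expand $g$ in the tableau bases of $M_0$ and $M_1$ provided by Proposition \ref{basis}, use the invariance hypothesis together with Remark \ref{remark} iteratively to pin down the support of $g$, and then read off the factorisation by induction on $\deg_{y_1} g$, calling on Remarks \ref{remark2} and \ref{remark3} to handle the cancellations.

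First, since $g$ is multihomogeneous it has a definite parity and therefore lies entirely in $M_0$ or in $M_1$. By Proposition \ref{basis} I would write $g = \sum_i c_i T_i$ as a $K$-linear combination of pairwise distinct doubly standard tableaux (in the even case) or doubly standard $0$-tableaux (in the odd case), each of the same multidegree as $g$. Substituting $y_1 \equiv y_1 + \sum_{i=2}^m \lambda_i y_i$ in this expansion yields (\ref{exp}), and invariance forces $P_{h_2, \dots, h_m} = 0$ for every nonzero tuple. By Remark \ref{remark} the component indexed by the lex-largest tuple that actually occurs equals $\sum_j \epsilon_j c_j F(T_j)$, a linear combination of pairwise distinct doubly standard (0-)tableaux, whose linear independence (again Proposition \ref{basis}) forces the corresponding $c_j$'s to vanish. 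Iterating until the maximum drops to zero, every $T_i$ in the support of $g$ satisfies $(h_2(T_i'), \dots, h_m(T_i')) = (0, \dots, 0)$; equivalently, every row of $T_i'$ beginning with $1$ is an exact consecutive segment $(1, 2, \dots, l)$ for some $l \leq m$.

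Next I induct on $d := \deg_{y_1} g$. If $d = 0$ then $g = h(y_2, \dots, y_m)$ and case (i) holds trivially with $k = 0$. For $d \geq 1$, combining the shape condition above with the hypotheses that $g$ depends only on $y_1, \dots, y_m$ and that the substitution sweeps through every $\lambda_j$ with $j = 2, \dots, m$ forces each $1$-row of each $T_i'$ to have full length $m$: a $1$-row of length $l < m$ would attach a Gram-determinant block $T_l$ which is not invariant under $y_1 \mapsto y_1 + \lambda_{l+1} y_{l+1}$, and the new doubly standard (0-)tableaux produced are independent in the tableau basis, so the offending coefficient must vanish. The $1$-rows then pair up into copies of $T_m$ (when $d$ is even), possibly preceded by a single leading $0$-row of shape $(0, 2, \dots, m \mid 1, 2, \dots, m)$ contributing a factor of $T_m^0$ (when $d$ is odd). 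Remark \ref{remark3} makes the extraction of an explicit $y_1$ precise in the odd case, and Remark \ref{remark2} guarantees that the cofactor $g_1 \in M$ is uniquely determined; $g_1$ is again invariant and satisfies $\deg_{y_1} g_1 < d$, so the induction hypothesis delivers the desired form (i) or (ii) for $g$.

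Finally, for the parity assertion, if $g \in J(B_m)_0$ were of form (ii), say $g = T_m^{k'} T_m^0 h$, then since $T_m^0$ has odd total degree $2m-1$ parity would force $h$ to be odd, hence $h = \sum_{j \geq 2} h_j y_j$ with $h_j$ scalar polynomials in $y_2, \dots, y_m$; the identity $T_m^0 \cdot y_j \equiv 0$ modulo $\mbox{Id}_2(B_m)$ (which follows because $T_m^0 \cdot y_j$ expands to a determinant whose row $1$ coincides with row $j$ by symmetry of $f$, itself a consequence of the basis identity from Proposition \ref{basisprop}) then collapses $g$ to $0$, so case (i) applies with $h = 0$. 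The main technical obstacle is the "full-length $1$-row" claim in the third paragraph: while the iteration of Remark \ref{remark} already removes tableaux with jumps, the remaining non-jump tableaux with potentially short $1$-rows require a more delicate linear-independence argument in the doubly standard basis, and carrying it out uniformly in the $M_0$ and $M_1$ cases (in particular for the asymmetric leading row of a $0$-tableau) is where the bulk of the work lies.
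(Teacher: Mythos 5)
Your overall strategy---expand in the doubly standard tableau basis, use invariance together with Remark \ref{remark} to kill the coefficients of every tableau with a nonzero tuple $(h_2,\dots,h_m)$, then factor out the rows containing the entry $1$---is the paper's strategy, but you leave the decisive step open and say so yourself. The point you flag as ``the main technical obstacle,'' namely excluding $1$-rows that are consecutive segments $(1,2,\dots,l)$ with $l<m$, is not a delicate extra linear-independence problem: it is already contained in the leading-term analysis, provided one observes that the substitution involves \emph{all} of $y_2,\dots,y_m$ while the tableau entries are bounded by $m$. A determinant block whose $p$-row is $(1,2,\dots,l)$ with $l<m$ contains $y_1$ but not $y_{l+1}$, so by multilinearity and alternation the substitution $y_1\mapsto y_1+\lambda_{l+1}y_{l+1}$ produces a genuinely new leading term; in other words, a row that begins $1,2,\dots,l$ and then fails to contain $l+1$ obstructs invariance whether the failure is a jump past $l+1$ or an early termination at $l$ (for $m=2$ the block $(1\,|\,2)$, whose polynomial is $y_1y_2$, has no jump yet is visibly not invariant under $y_1\mapsto y_1+\lambda y_2$). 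Once the vanishing of $h_2(T_i'),\dots,h_m(T_i')$ is read this way, every row containing $1$ is forced to be $(12\dots m\,|\,12\dots m)$ outright, the number of such rows equals $\deg_{y_1}g/2$ for every $T_i$ in the support by multihomogeneity, and case (i) drops out at once---no induction on $\deg_{y_1}g$ is needed. As written, your argument does not close this gap.

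A second divergence concerns the odd case. You expand $g$ directly in the basis of doubly standard $0$-tableaux and apply Remark \ref{remark} to them, but that remark is stated (and established in \cite{concini/procesi}) for double tableaux; the leading row of a $0$-tableau is not a determinant, and its behaviour under the substitution is not covered by the cited analysis. The paper avoids this entirely by multiplying $g$ by a fresh odd variable $y_{m+1}$, which places $gy_{m+1}$ in $M_0$ where the double-tableau machinery applies, and then cancels $y_{m+1}$ using nondegeneracy of the form (Remarks \ref{remark2} and \ref{remark3}); the row $(12\dots m\,|\,23\dots m{+}1)$, which can occur at most once and exactly when $\deg_{y_1}g$ is odd, is what produces the factor $T_m^0$ of case (ii). Your closing parity argument via the identity $T_m^0\cdot y_j=0$ for $2\le j\le m$ is correct but superfluous: in the paper's treatment the even case never produces a $T_m^0$ factor in the first place.
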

\begin{proof}
We first consider the case $g\in J(B_m)_0$. It follows from the previous proposition that we may write 
\begin{equation}\label{g}
g=\sum c_iT_i, \mbox{ } c_i\neq 0
\end{equation}
as a linear combination of pairwise distinct doubly standard tableau. The invariance of $g$ and Remark \ref{remark} imply that 
$h_2(T_i^{\prime})=h_3(T_i^{\prime})=\dots=h_m(T_i^{\prime})=0$ for every $T_i$ in (\ref{g}). 
Since the variables appearing in $g$ are $y_1,\dots, y_m$ this implies that the entry $1$ only appears in the rows of $T_i$ that are equal to $(12\dots m|12\dots m)$. If $k_i$ is the number of such rows we have $2k_i=deg_{y_1}g$. Let $k$ be the common value of the $k_i$. For every $T_i$ appearing in (\ref{g}) we have $T_i=(T_m)^k\widehat{T_i}$, where $\widehat{T_i}$ is the tableau obtained from $T_i$ by deleting the rows in which the entry $1$ appears. Hence $g$ is decomposed as in $(i)$. 

Assume now that $g \in J(B_m)_1$ and write $gy_{m+1}$ as in (\ref{g}). Note that $h_2(T_i^{\prime})=h_3(T_i^{\prime})=\dots=h_m(T_i^{\prime})=0$ for every $T_i$ appearing $gy_{m+1}$. Hence the entry $1$ only appears in the rows of $T_i$ equal to $(12\dots m| 12\dots m)$ or $(12\dots m|2\dots m+1)$. This last row appears at most once, it appears if $deg_{y_1}g$ is odd and does not appear otherwise. Let $k_i$ be the number of rows in $T_i$ that are equal to $(12\dots m| 12\dots m)$. If $(12\dots m|23\dots m+1)$ does not appear in $T_i$ we have $2k_i=deg_{y_1}g$. Therefore in this case for every $T_i$ appearing in $gy_{m+1}$ we have $T_i=(T_m)^k\widehat{T_i}$, where $k=deg_{y_1}g/2$.  Hence $gy_{m+1}=(T_m)^kh(y_2,\dots, y_{m+1})$ and it follows from Remark \ref{remark2} and Remark \ref{remark3} that $g$ is as in $(i)$. If $(12\dots m|23\dots m+1)$ appears in some $T_i$ we have $2k_i+1=deg_{y_1}g$ and thus it appears in every $T_i$. The polynomial associated to the tableau $(12\dots m|23\dots m+1)$ is $\pm T_m^0y_{m+1}$. In this case $gy_{m+1}=((T_m)^{k}T_m^0h(y_2,\dots, y_m))y_{m+1}$, where $k=(deg_{y_1}g-1)/2$. From this last equality and Remark \ref{remark2} we conclude that $g$ is as in $(ii)$.
\end{proof}

\section{The main results}

We denote by $M_{m}$ the subalgebra of $J(B_m)$ generated by $y_1,\dots, y_m$. Let $W$ denote the subspace of $J(B_m)$ generated by $y_1,\dots, y_m$ and $GL_m$ the group of invertible linear transformations of $W$. The canonical action of $GL_m$ on $M_{m}$ turns this subalgebra into a $GL_m$-module. We refer the reader to \cite{drensky/formanek} for basic facts regarding polynomial representations of $GL_m$ and applications to algebras with polynomial identities.

\begin{lemma}\cite[Theorem 2.2.11]{drensky/formanek}\label{lemma}
Let $g(y_1,\dots, y_m)$ be a multihomogeneous polynomial of $M_m$. The $GL_m$-module generated by $g(y_1,\dots, y_m)$ is an irreducible module if and only if for any scalars $\lambda_{ij}$ $1\leq i<j\leq m$ it is invariant under the substitutions 
\begin{equation}\label{subs0}
y_j\equiv \lambda_{1j}y_1+\dots +\lambda_{j-1j}y_{j-1}+y_j\mbox{ } j=1,2,\dots,m.
\end{equation}
\end{lemma}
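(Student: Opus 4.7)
The plan is to view this lemma as a direct application of the classical highest-weight theory for polynomial representations of $GL_m$ in characteristic zero, as codified in Theorem 2.2.11 of \cite{drensky/formanek}. The first step is to identify the substitutions (\ref{subs0}), $y_j \mapsto \lambda_{1j} y_1 + \cdots + \lambda_{j-1,j} y_{j-1} + y_j$, with the action on $W$ of the upper unitriangular subgroup $U \subseteq GL_m$. With this identification the stated invariance becomes exactly $U$-invariance of $g$ inside $M_m$.

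Next, since $\mathrm{char}\, K = 0$, Weyl's complete reducibility splits any finite-dimensional $GL_m$-submodule of $M_m$ into a direct sum of irreducible Schur modules $V_\lambda$. In each such $V_\lambda$ the space of $U$-invariants is one-dimensional and coincides with the highest-weight line. Because $g$ is multihomogeneous, it is a weight vector for the diagonal torus $T \subseteq GL_m$ with $T$-weight equal to its multidegree; if moreover $g$ is $U$-invariant, then $g$ is automatically a highest-weight vector. The ``if'' direction follows at once: the cyclic module generated by a highest-weight vector is the irreducible Schur module with that highest weight.

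For the converse I would suppose that the cyclic $GL_m$-submodule generated by $g$ is irreducible, isomorphic to some $V_\lambda$, and argue that the $T$-weight of $g$ must equal $\lambda$, forcing $g$ into the one-dimensional highest-weight line and hence into the space of $U$-invariants. The main obstacle will be making this matching step precise: ruling out a multihomogeneous generator of an irreducible Schur submodule of $M_m$ whose $T$-weight is strictly below the highest weight of that submodule. This requires the fine structure of the weight multiplicities in the Schur components of $M_m$, which in our setting is controlled by the tableau basis of Proposition \ref{basis}. It is the nontrivial content of Theorem 2.2.11 of \cite{drensky/formanek}, and the cleanest way to handle it is to quote that result as a black box.
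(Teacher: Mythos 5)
The paper offers no proof of this lemma at all: it is quoted as \cite[Theorem 2.2.11]{drensky/formanek}, which is exactly where your proposal also ends up. Your sketch of the ``if'' direction via the identification of (\ref{subs0}) with the unitriangular subgroup and the highest-weight line is correct, and you rightly single out the ``only if'' direction as the delicate step to be taken from the cited theorem as a black box, so your approach coincides with the paper's.
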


Next we apply this lemma and Proposition \ref{prop} to determine generators of the irreducible submodules of $M_m$. The following notation will be usefull.

\begin{notation}
Given a polynomial $g(y_1,\dots,y_m)$ in $M_m$ we denote $\widehat{g}(y_1,\dots,y_m)$ the polynomial $g(y_m,\dots, y_1)$. Denote by $S_m$ the polynomial corresponding to the tableaux $(012\dots m-1|12 \dots m)$. We have $\widehat{T_m^0}=\pm S_m$.
\end{notation}

Note that the polynomial $g(y_1,\dots,y_m)$ in $M_m$ is invariant under the substitutions 

\[y_m\equiv \lambda_1y_1+\dots \lambda_{m-1}y_{m-1}+y_m,\]

for any $\lambda_i \in K$ if and only if $\widehat{g}(y_1,\dots,y_m)=g(y_m,\dots, y_1)$ is invariant under the substitutions 

\begin{equation}\label{subs}
y_1\equiv y_1+\sum_{i=2}^m\lambda_iy_i,
\end{equation}

for any $\lambda_i \in K$. 

\begin{corollary}\label{theorem1}
The polynomial $g(y_1,\dots,y_m)\in M_m$ generates an irreducible $GL_m$-submodule of $M_m$ if and only if it is up to multiplication by a scalar one of the polynomials
\begin{equation}\label{pol}
(S_{m})^{\delta_{m}} \cdots (S_{1})^{\delta_{1}} (T_m)^{k_m}\cdots (T_1)^{k_1}
\end{equation}
where $k_1$, $\dots$,  $k_m \geq 0$,  $\delta_l\in \{0,1\}$  and $\delta_l \neq 0$ for at most one index $l \in \{1, \ldots, m\}$.
\end{corollary}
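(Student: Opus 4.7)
The plan rests on Lemma \ref{lemma}: a multihomogeneous $g\in M_m$ generates an irreducible $GL_m$-submodule precisely when it is invariant under every substitution (\ref{subs0}). I would peel off a factor coming from the $y_m$-slot using the hat involution together with Proposition \ref{prop}, and then recurse on the resulting polynomial in $y_1,\ldots,y_{m-1}$. Concretely, invariance of $g$ under $y_m\equiv \lambda_1y_1+\cdots+\lambda_{m-1}y_{m-1}+y_m$ is equivalent to invariance of $\widehat g$ under (\ref{subs}) by the observation preceding the corollary, so Proposition \ref{prop} forces
\[
\widehat g=(T_m)^k h(y_2,\ldots,y_m) \quad \text{or} \quad \widehat g=(T_m)^{k'}T_m^0 h(y_2,\ldots,y_m).
\]
Applying the hat once more --- a graded algebra endomorphism of $J(B_m)$ satisfying $\widehat{T_m}=T_m$ (since a simultaneous reversal of the rows and columns of the Gram matrix leaves the determinant invariant) and $\widehat{T_m^0}=\pm S_m$ --- recovers either $g=(T_m)^k \tilde h(y_1,\ldots,y_{m-1})$ or $g=\pm(T_m)^{k'}S_m\tilde h(y_1,\ldots,y_{m-1})$, where $\tilde h$ is the hat of $h$.

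Next I would verify that $T_m$ and $S_m$ are themselves invariant under every substitution (\ref{subs0}): for $T_m=\det|(y_i\cdot y_j)|$ this is the classical fact that a simultaneous elementary row-and-column operation preserves the determinant of the Gram matrix; for $S_m$ it is the highest-weight-vector property, which follows via the hat from invariance of $T_m^0$ under the lower unipotent family. Combined with the uniqueness of the doubly standard tableau decomposition (Proposition \ref{basis}), together with Remarks \ref{remark2} and \ref{remark3} in the odd branch (recall that $S_m$ is linear in $y_m$), this implies that $\tilde h$ is itself invariant under (\ref{subs0}) for $j=1,\ldots,m-1$. An induction on $m$ (base case $m=1$: $T_1=y_1\cdot y_1$, $S_1=y_1$) then produces, up to a scalar,
\[
\tilde h = (S_{m-1})^{\delta_{m-1}}\cdots(S_1)^{\delta_1}(T_{m-1})^{k_{m-1}}\cdots(T_1)^{k_1},
\]
with at most one $\delta_l\neq 0$ for $l\in\{1,\ldots,m-1\}$.

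To promote the bound ``at most one $\delta_l\neq 0$'' to the full range $\{1,\ldots,m\}$, I would exploit the parity observation that Proposition \ref{prop} produces the $T_m^0$-branch only when $g\in J(B_m)_1$. Since $S_m$ is odd and $T_m$ is even, in the $S_m$-branch $\tilde h$ must be even, hence has an even number of $S$-factors, and combined with the inductive bound this forces $\delta_1=\cdots=\delta_{m-1}=0$; in the $T_m$-only branch one sets $\delta_m=0$ and inherits the inductive bound directly. The converse implication, that every polynomial of the displayed form generates an irreducible submodule, reduces by Lemma \ref{lemma} to the invariance of each $T_l$ and $S_l$ under (\ref{subs0}), which is the same calculation already performed. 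The step I expect to require most care is justifying the recursive application of Proposition \ref{prop} and of the basis statement of Proposition \ref{basis} to $\tilde h$, a polynomial depending on only $m-1$ variables: one must check that the tableau arguments carry over verbatim to doubly standard tableaux with entries in $\{1,\ldots,m-1\}$, so that the factor extracted at stage $n\leq m$ is indeed the intended $T_n$ or $T_n^0$.
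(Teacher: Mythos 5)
Your proof follows the paper's argument essentially verbatim: induction on $m$, reduction via Lemma \ref{lemma} and the hat involution to Proposition \ref{prop}, extraction of a $T_m$ or $S_m$ factor, and the parity observation that the $S_m$-branch (which occurs only for odd $g$) forces the remaining factor to be even and hence, by the inductive bound of at most one odd $S$-factor, to have none at all. The extra verifications you flag (invariance of $T_m$ and $S_m$ under the substitutions, and the converse direction) are left implicit in the paper but are correctly supplied in your outline.
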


\begin{proof}
We prove the result by induction on $m$. If $m=1$ the result is obvious. Now let $g(y_1,\dots,y_m)\in M_m$ generate an irreducible submodule. If follows from Lemma \ref{lemma} that $\widehat{g}$ is invariant under the substitutions (\ref{subs}). It follows from Proposition \ref{prop} that we have two possibilities: (i) $\widehat{g}=(T_m)^k h(y_2,\dots,y_m)$ or (ii) $\widehat{g}=(T_m)^{k^{\prime}}T_m^0h(y_2,\dots,y_m)$. In the first case we obtain $g=(T_m)^k h^{\prime}(y_1,\dots,y_{m-1})$. It is clear that $h^{\prime}$ is invariant under the substitutions (\ref{subs0}) and the result follows.  In the second case $g=\pm (T_m)^{k^{\prime}}S_mh^{\prime}(y_1,\dots,y_{m-1})$ with $h^{\prime}$ invariant under the substitutions (\ref{subs0}). Note that this occurs only if $g$ lies in $J(B_m)_1$ and in this case $h^{\prime}$ lies in $J(B_m)_0$. The induction hypothesis applied to $h^{\prime}$ implies that it is up to scalar of the form $(T_{m-1})^{k_{m-1}}\cdots (T_1)^{k_1}$. 
\end{proof}

\begin{example} The algebra $B_2$ of $2 \times 2$ symmetric matrices over $K$  with the Jordan product $a \circ b = (ab+ ba)/2$ is a Jordan algebra of a nondegenerate symmetric bilinear form on a vector space of dimension $2$. In this case $g(y_1,y_2) \in M_2$ generates an irreducible $GL_2$-submodule of $M_2$ if and only if it is up to multiplication by a scalar one of the polynomials
$$(y_1^2 y_2^2 - (y_1y_2)^2)^{k_2} y_1^{2k_1} \mbox{ if } g \in J(B_2)_0;$$ 
$$(y_1^2 y_2^2 - (y_1y_2)^2)^{k_2} y_1^{2k_1 + 1} \mbox{ or } (y_1^2 y_2^2 - (y_1y_2)^2)^{k_2} (\bar{y}_1 y_1\bar{y}_2)y_1^{2k_1} \mbox{ if } g \in J(B_2)_1.$$  
\end{example}

Recall that any result on homogeneous polynomial identities obtained in the language of representations of the general linear group is equivalent to a corresponding result on multilinear polynomial identities obtained in the language of representation of the symmetric group. The complete linearization of a highest weight vector associated to an irreducible $GL_1\times GL_m$-module generates an irreducible $S_k \times S_n$-module. As a consequence we have the description of the $2$-graded cocharacter sequence of $B_m$.

%\begin{theorem}  Let 
%$$\chi_{k, n-k}(B_2) = \sum_{\lambda \vdash k,\, \mu \vdash n-k}m_{\lambda, \mu} \chi_{\lambda}\otimes\chi_{\mu}$$
%be the $(k, n-k)$-th cocharacter of $B_2$. Then, for every $\lambda \vdash k \mbox{ and } \mu \vdash n-k,$ $m_{\lambda, \mu} \leq 1$. Moreover, $m_{\lambda, \mu} = 1$ if and only if %$\lambda = (k),$ $\mu = (q+r, q)$ and the following conditions hold:
%\begin{enumerate}
%\item $q \equiv 0$ (mod 2);
%\item $q \equiv 1$ and $r \equiv 1$ (mod 2).
%\end{enumerate}
%\end{theorem}

\begin{theorem}  Let 
$$\chi_{k, n}(B_m) = \sum_{\lambda \vdash k,\, \mu \vdash n}m_{\lambda, \mu} \chi_{\lambda}\otimes\chi_{\mu}$$
be the $(k, n)$-th cocharacter of $B_m$. Then, for every $\lambda \vdash k \mbox{ and } \mu \vdash n,$ $m_{\lambda, \mu} \leq 1$. Moreover, $m_{\lambda, \mu} = 1$ if and only if $\lambda = (k),$ $\mu = (r_m, \ldots, r_1)$ is a partition of $n$ such that at most one $r_i$ is odd.
\end{theorem}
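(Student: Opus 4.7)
The plan is to translate the description of irreducible $GL_m$-submodules from Corollary \ref{theorem1} into $S_n$-character language, dealing separately with the $S_k$-action on the $x$-variables.

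First I would show that only $\lambda = (k)$ contributes. By the observation just after Proposition \ref{basisprop}, every multihomogeneous polynomial of $J(B_m)$ is congruent modulo $\mbox{Id}_2(B_m)$ to $x_1^{\alpha_1}\cdots x_k^{\alpha_k} g(y_1,\ldots,y_n)$. In a multilinear element of $P_{k,n}$ all $\alpha_i$ equal $1$, and Jordan commutativity together with the identities (\ref{identity}) make the ordering of the $x$'s irrelevant modulo $\mbox{Id}_2(B_m)$. Hence every class in $P_{k,n}(B_m)$ has a canonical representative $x_1\cdots x_k\, g(y_1,\ldots,y_n)$ fixed by $S_k$, and the specialisation $x_i\mapsto 1$ (using the unit of $B_m$) identifies $P_{k,n}(B_m)$ with $\chi_{(k)}\otimes P_{0,n}(B_m)$ as $S_k\times S_n$-modules. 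This forces $m_{\lambda,\mu}=0$ for $\lambda\ne(k)$, and reduces the problem to computing the $S_n$-character of $P_{0,n}(B_m)$. By the classical duality between polynomial representations of $GL_n$ on multihomogeneous components and representations of $S_n$ on multilinear components (see, e.g., \cite{drensky/formanek}), the multiplicity of $\chi_\mu$ in $P_{0,n}(B_m)$ equals the multiplicity of the irreducible $GL_n$-module of highest weight $\mu$ in the corresponding multihomogeneous component of the subalgebra of $J(B_m)$ generated by the $y$-variables.

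Finally I would count those $GL_n$-submodules. Any highest weight vector must have its multidegree supported on the first $m$ indices, thanks to the $(m+1)$-alternating identity in Proposition \ref{basisprop}; hence it lies in $M_m$, and Corollary \ref{theorem1} describes it (up to scalar) as
\[(S_m)^{\delta_m}\cdots(S_1)^{\delta_1}(T_m)^{k_m}\cdots(T_1)^{k_1},\]
with $k_l\ge 0$, $\delta_l\in\{0,1\}$, and at most one $\delta_l$ nonzero. A direct calculation gives
\[\deg_{y_i} \;=\; 2\sum_{l\ge i}k_l \;+\; 2\sum_{l>i}\delta_l \;+\; \delta_i,\]
which is weakly decreasing in $i$ and is odd for at most one index $i$ (namely the one with $\delta_i=1$, if any). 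The recurrence $2k_i=\deg_{y_i}-\deg_{y_{i+1}}-\delta_i-\delta_{i+1}$ together with the congruence $\delta_i\equiv \deg_{y_i}\pmod 2$ shows that every partition of $n$ with at most $m$ parts and at most one odd part is the multidegree of a unique such generator. This gives $m_{(k),\mu}=1$ for those $\mu$ and $0$ otherwise.

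I expect the main obstacle to be the bookkeeping in the last step: matching the multidegree formula above to the indexing $\mu=(r_m,\ldots,r_1)$ of the statement (i.e., setting $r_i=\deg_{y_{m-i+1}}$), and verifying that the assignment between admissible tuples $(k_\bullet,\delta_\bullet)$ and admissible partitions is a bijection (in particular, checking $k_i\ge 0$ whenever $\mu$ has at most one odd part). The first two steps are essentially routine once the normal form following (\ref{identity}) and the standard $GL_n$--$S_n$ duality are invoked.
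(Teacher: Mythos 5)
Your argument is correct and follows the same route the paper intends: reduce to the normal form $x_1\cdots x_k\,g(y_1,\ldots,y_n)$ (which forces $\lambda=(k)$), invoke the standard $GL$--$S_n$ correspondence, and read off the admissible highest weights from the generators classified in Corollary \ref{theorem1}. The multidegree computation $\deg_{y_i}=2\sum_{l\ge i}k_l+2\sum_{l>i}\delta_l+\delta_i$ and the inversion showing the correspondence $(k_\bullet,\delta_\bullet)\leftrightarrow\mu$ is bijective are exactly the bookkeeping the paper leaves implicit, and they check out.
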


Now we proceed to prove that $B_m$ has the Specht property. We will need the following:

\begin{lemma}
Let $\widetilde{T_k}$, $\widetilde{S_k}$, $k=1,2,\dots$ be polynomials in $J(Z)$ such that the image of $\widetilde{T_k}$, $\widetilde{S_k}$ under the canonical homomorphism are the polynomials $T_k$, $S_k$ in $J(B_m)$ respectively. The set of polynomials 
\begin{equation}\label{polynomials}
x_1^{k_0}(\widetilde{S_{m}})^{\delta_{m}} \cdots (\widetilde{S_{1}})^{\delta_{1}} (\widetilde{T_{m}})^{k_m}\cdots (\widetilde{T_{1}})^{k_1},
\end{equation} 
where $k_0$, $\dots$,  $k_m \geq 0$,  $\delta_l\in \{0,1\}$  and $\delta_l \neq 0$ for at most one index $l \in \{1, \ldots, m\}$, has the following property: given any subset $S$ there exists a finite subset $\widehat{S}\subseteq S$ such that any polynomial in $S$ is the consequence of a polynomial in $\widehat{S}\cup Id_2(B_m)$.
\end{lemma}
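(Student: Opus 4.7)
The plan is to apply the finite basis property from Theorem \ref{pbf} to a suitable quasi-order on the index set parametrizing the polynomials (\ref{polynomials}). Each such polynomial is uniquely determined by a tuple $(k_0, k_1, \ldots, k_m; \ell) \in \mathbb{N}^{m+1} \times \{0, 1, \ldots, m\}$, where $\ell \geq 1$ records the unique index with $\delta_\ell = 1$ and $\ell = 0$ signals that no $\widetilde{S}$-factor appears. I define the quasi-order $(a, \ell) \preceq (b, \ell')$ by requiring $\ell = \ell'$ together with $a_i \leq b_i$ for every $i$. The factor $\mathbb{N}^{m+1}$ with componentwise order satisfies the finite basis property (Dickson's lemma, an iterated application of Proposition \ref{ppbf} from the trivial case of $\mathbb{N}$), while the finite set $\{0, 1, \ldots, m\}$ under equality satisfies it trivially by the pigeonhole principle applied to condition (3) of Theorem \ref{pbf}. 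A further application of Proposition \ref{ppbf} then yields the finite basis property for $\preceq$.

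The central step is to verify that $(a, \ell) \preceq (b, \ell)$ implies that $p_{(b, \ell)}$ is a consequence of $p_{(a, \ell)}$ modulo $\mbox{Id}_2(B_m)$. I would establish this by exhibiting the explicit multiplier
$$q = x_1^{b_0 - a_0} (\widetilde{T_m})^{b_m - a_m} \cdots (\widetilde{T_1})^{b_1 - a_1}$$
and checking the congruence $p_{(b, \ell)} \equiv q \cdot p_{(a, \ell)} \pmod{\mbox{Id}_2(B_m)}$, which is equivalent to the equality $p_{(b, \ell)} = q \cdot p_{(a, \ell)}$ in $J(B_m)$. This equality follows from the concrete structure of $B_m = K \oplus V$: each $T_k$ reduces to an element of the associative-commutative even part $K$, the element $S_\ell$ reduces to an element of $V$, and identity (\ref{identity}) shows that multiplication by the even variable $x_1$ associates freely with any pair of subsequent factors, so the products in question can be regrouped and reordered inside $J(B_m)$ without restriction.

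Finally, given any subset $S$ of the polynomials in (\ref{polynomials}), let $S^* \subseteq \mathbb{N}^{m+1} \times \{0, 1, \ldots, m\}$ denote the corresponding set of index tuples. Theorem \ref{pbf}(2) supplies a finite subset $S^*_0 \subseteq S^*$ with $S^* \subseteq \overline{S^*_0}$, and the finite subfamily $\widehat{S} \subseteq S$ indexed by $S^*_0$ is the desired set: any $p \in S$ has an index tuple that dominates some tuple in $S^*_0$ with matching $\ell$, so by the central step $p$ is a consequence of the corresponding element of $\widehat{S}$ modulo $\mbox{Id}_2(B_m)$. The principal obstacle is the middle step: although intuitively transparent, one must carefully check that any chosen parenthesization of the formal product $q \cdot p_{(a, \ell)}$ in $J(Z)$ indeed coincides with $p_{(b, \ell)}$ after projecting to $J(B_m)$, which is ultimately a consequence of the fact that within the relevant subalgebra the Jordan multiplication collapses to the associative-commutative operations on $K$ together with the scalar action of $K$ on $V$.
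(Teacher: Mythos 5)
Your proof is correct and follows essentially the same route as the paper: a componentwise (Dickson-type) order on the exponent tuples, the observation that domination of exponents yields divisibility in $J(B_m)$ because the $T_k$ are even and hence associate and commute modulo the identities (\ref{identity}), and an appeal to Theorem \ref{pbf} and Proposition \ref{ppbf}. The only (cosmetic) difference is that you track the single $\widetilde{S_\ell}$-factor by equality of the label $\ell$ rather than by componentwise order on the $\delta$'s, which if anything is cleaner since your multiplier $q$ then involves only even factors.
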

\begin{proof}
To prove this we define in $\mathbb{N}^{2m+1}$ the partial order $\leq$ as follows:
\[(\delta_1, \dots, \delta_m, k_0,\dots, k_m)\leq (\delta_1^{\prime}, \dots, \delta_m^{\prime}, k_0^{\prime},\dots, k_m^{\prime})\mbox{ if } \delta_i \leq \delta_i^{\prime}, k_j\leq k_j^{\prime} \mbox{ for all } i, j.\] Clearly the above inequality implies that $x_1^{k_0^{\prime}}(S_{m}^0)^{\delta_{m}^{\prime}} \cdots (S_{1}^0)^{\delta_{1}^{\prime}} (T_{m})^{k_m^{\prime}}\cdots (T_{1})^{k_1^{\prime}}$ is obtained from $x_1^{k_0}(S_{m})^{\delta_{m}} \cdots (S_{1})^{\delta_{1}} (T_{m})^{k_m}\cdots (T_{1})^{k_1}$ by multiplication by a suitable element of $J(B_m)$. By Proposition \ref{ppbf} the set $\mathbb{N}^{2m+1}$ with the partial order above has the finite basis property and this implies the result.
\end{proof}

Next we prove our main result.

\begin{theorem}\label{main}
The ideal $Id_2(B_m)$ of $\mathbb{Z}_2$-graded identities of $B_m=K\oplus V$ has the Specht property.
\end{theorem}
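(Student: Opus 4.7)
The plan is to combine the preceding Lemma with a representation-theoretic reduction of any multihomogeneous polynomial in $I$ to the canonical forms (\ref{polynomials}) via the $GL$-module structure. Let $I\supseteq \mbox{Id}_2(B_m)$ be a $T_2$-ideal, and let $S$ denote the set of polynomials of the form (\ref{polynomials}) that lie in $I$. The preceding Lemma provides a finite subset $\widehat{S}\subseteq S$ with the property that every polynomial in $S$ is a consequence of $\widehat{S}\cup\mbox{Id}_2(B_m)$. Since $\mbox{Id}_2(B_m)$ is itself finitely generated as a $T_2$-ideal by Proposition \ref{basisprop}, it will suffice to prove that every multihomogeneous $f\in I$ is a consequence of $S\cup\mbox{Id}_2(B_m)$.

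For this step I would fix such an $f$, multihomogeneous of degree $k$ in the $x$-variables and degree $n$ in the $y$-variables. Because $I$ is closed under graded endomorphisms of $J(Z)$, its image in $J(B_m)$ generates a $(GL_k\times GL_n)$-submodule $M(f)$, where the action is induced by invertible linear substitutions in each block of variables. Complete reducibility in characteristic zero decomposes $M(f)$ into irreducibles, each generated by its highest weight vector. Since the $x$-variables are central in $J(B_m)$ by (\ref{identity}), the $x$-part of any highest weight vector must be a scalar multiple of $x_1^{k_0}$. For the $y$-part I would invoke Corollary \ref{theorem1}: after passing to at most $m$ $y$-variables (the reduction being enforced by the alternating identity of Proposition \ref{basisprop}, which annihilates every partition with more than $m$ rows), the highest weight vector is, up to scalar, a polynomial of the form (\ref{pol}). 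Thus every highest weight vector of $M(f)$ is, up to scalar, the image in $J(B_m)$ of a polynomial of the form (\ref{polynomials}) in $J(Z)$, and since it lies in $I$ it belongs to $S$.

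Applying graded endomorphisms implementing the $GL_k\times GL_n$-orbit of these highest weight vectors then recovers all of $M(f)$ modulo $\mbox{Id}_2(B_m)$, so $f$ is a consequence of $S\cup\mbox{Id}_2(B_m)$. Combined with the first paragraph, this yields $I=\langle\widehat{S}\cup\mbox{Id}_2(B_m)\rangle_{T_2}$, which is a finite $T_2$-basis after adjoining the finite basis of $\mbox{Id}_2(B_m)$ from Proposition \ref{basisprop}, proving the Specht property.

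The main obstacle is the transfer from Corollary \ref{theorem1}, which is stated for $M_m$, to the setting of $M_n$ with arbitrary $n$. The point will be that the alternating identity of Proposition \ref{basisprop} forces every irreducible component appearing in $M(f)$ to correspond to a $y$-partition with at most $m$ rows, so its highest weight vector lies effectively in a subspace parametrized by at most $m$ $y$-variables; Corollary \ref{theorem1} will then apply after relabeling, closing the argument.
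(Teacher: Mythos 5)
Your proposal is correct and follows essentially the same strategy as the paper: decompose the module generated by an element of $I$ into irreducibles, identify the generators (via Corollary \ref{theorem1}) with polynomials of the form (\ref{polynomials}), and conclude with the well-quasi-ordering Lemma together with the finite basis of $\mbox{Id}_2(B_m)$ from Proposition \ref{basisprop}. The only difference is presentational: the paper starts from multilinear polynomials, decomposes as an $S_n$-module, and then identifies variables to reach the $GL_m$-picture, whereas you work with multihomogeneous polynomials and the $GL$-action throughout --- a translation the paper itself notes is standard in characteristic zero.
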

\begin{proof}
Let $I$ be a $T_2$-ideal containing $Id_2(B_m)$. Henceforth we say that two sets of polynomials $S$ and $S^{\prime}$ in $J(Z)$ are equivalent if $S\cup Id_2(B_m)$ and $S^{\prime}\cup Id_2(B_m)$ generate the same $T_2$-ideal. We claim that for every multilinear polynomial $g$ in $I$ there exists a set of polynomials $S_g\subseteq I$ of polynomials of the form (\ref{polynomials}) such that $\{g\}$ and $S_g$ are equivalent. Let $S=\cup S_g$, where the union is over all multilinear polynomials $g$ in $I$. Since the field is of characteristic zero this implies that $I$ is generated by $S\cup Id_2(B_m)$. Hence the previous lemma and Proposition \ref{basisprop} imply the theorem. Let $g(x_1,\dots, x_k,y_1,\dots, y_n)$ be a multilinear element in $I$. We work in the relatively free algebra $J(B_m)$ and denote by $\varphi:J(Z)\rightarrow J(B_m)$ the canonical homomorphism. The group $S_n$ acts on the odd variables of $\varphi(g)$. We decompose this module, denoted by $M$, into a direct sum of irreducible modules \[M=M_1\oplus\dots\oplus M_q,\] and let $p_i$ be a generator of the $S_n$-module $M_i$. A set of $q$ polynomials $\{P_1,\dots, P_q\}$ such that $\varphi(P_i)=p_i$ is equivalent to $\{g\}$. The result is proved once we prove that each $P_i$ is equivalent to a finite set of polynomials of the form  (\ref{polynomials}). Let $\lambda\vdash n$ be the partition of $n$ corresponding to $M_i$. We write $p_i=x_1\cdots x_{k}p_i^{\prime}(y_1,\dots,y_n)$ and note that the $S_n$-module generated by $p_i^{\prime}$ is isomorphic as an $S_n$-module to $M_i$. Since $dim V=m$ we conclude that $\lambda_{m+1}=0$. Then $p_i^{\prime}$ is symmetric in $m$ disjoint sets of variables and we may identify the variables in each set to obtain a multihomogeneous polynomial $q_i(y_1,\dots, y_m)$. The complete linearization of $q_i$ is $p_i^{\prime}$. We decompose the $GL_m$-module generated by $q_i$ and let $q_i^1,\dots,q_i^r$ be generators of its irreducible components. Thus Corollary \ref{theorem1} implies that each $q_i^j$ is up to scalar a polynomial of the form (\ref{pol}) and hence $x_1^{k}q_i^j$ is the image under the canonical homomorphism of a polynomial $Q_i^j$ of the form (\ref{polynomials}). The linearization polynomial $x_1^{k}q_i$ is up to multiplication by scalar $p_i$. The set $S_i=\{Q_i^1\dots Q_i^r\}$ is equivalent to $\{P_i\}$. Hence $S_g=S_1\cup \dots \cup S_q$ is equivalent to $\{g\}$.
\end{proof}

\end{document}